\documentclass[11pt]{article}

\usepackage[a4paper,margin=1in]{geometry}

\usepackage[T1]{fontenc}
\usepackage{libertinus}

\usepackage{microtype}

\usepackage{amsmath,amssymb,amsthm,mathtools}
\usepackage{mathrsfs}

\usepackage{enumitem}
\usepackage{xcolor}
\usepackage{comment}
\usepackage{url}
\usepackage[hidelinks]{hyperref}

\newcommand{\R}{\mathbb{R}}
\newcommand{\Id}{\mathrm{Id}}
\newcommand{\E}{\mathbb{E}}

\DeclareMathOperator{\Cov}{Cov}
\DeclareMathOperator{\Var}{Var}
\DeclareMathOperator{\Tr}{Tr}

\DeclareMathOperator{\codim}{codim}

\theoremstyle{plain}
\newtheorem{theorem}{Theorem}[section]
\newtheorem{proposition}[theorem]{Proposition}
\newtheorem{lemma}[theorem]{Lemma}
\newtheorem{corollary}[theorem]{Corollary}

\theoremstyle{remark}

\title{A Deterministic Proof of the Slicing Theorem}
\author{Silouanos Brazitikos}
\date{}

\begin{document}

\maketitle
\begin{abstract}
We give a deterministic proof of Bourgain's slicing theorem.
The proof uses neither stochastic localization nor stochastic calculus,
does not rely on Guan's covariance bound, and avoids $M$-ellipsoids,
Pisier's regularization machinery, and complex interpolation.
Instead, the argument is based on a deterministic one-parameter
quadratic perturbation of the measure, a covariance-survival principle,
and a geometric analysis of $L_p$-centroid bodies.
\end{abstract}
\section{Introduction}

A convex body $K\subseteq\R^n$ is said to be in isotropic position if
\[
    |K|=1,
    \qquad
    \int_K x\,dx=0,
\]
and there exists a constant $L_K>0$ such that
\[
    \int_K \langle x,\theta\rangle^2\,dx
    =
    L_K^2
\]
for every $\theta\in S^{n-1}$. The number $L_K$ is called the
isotropic constant of $K$. More generally, for an arbitrary centered
convex body one may define the affine-invariant quantity
\[
    L_K
    =
    |K|^{-1/n}
    \det\Cov(\lambda_K)^{1/(2n)},
\]
where $\lambda_K$ denotes the uniform probability measure on $K$.
We write
\[
    L_n
    =
    \sup\{L_K:K\subseteq\R^n\text{ is a convex body}\}.
\]

Bourgain's slicing problem, also known as the hyperplane conjecture,
asks whether
\[
    L_n\leq C
\]
for a universal constant $C>0$. Equivalently, every convex body
$K\subseteq\R^n$ of volume one should admit a hyperplane section of
$(n-1)$-dimensional volume bounded from below by a universal positive
constant. The problem has played a central role in asymptotic convex
geometry since Bourgain's work in the 1980s
\cite{Bourgain1986,Bourgain1991}. We refer to
\cite{BGVV2014} for a systematic account of the theory of isotropic
convex bodies, and to the recent surveys
\cite{KlartagLehecSurvey2025,GiannopoulosPafisTziotziou2025}
for the developments leading to the resolution of the problem and for
its connections with concentration, isoperimetry, the thin-shell
problem, and the Kannan--Lov\'asz--Simonovits conjecture.

For many years the best general estimates for $L_n$ were polynomial
in the dimension. A sequence of major advances based on stochastic
localization led eventually to polylogarithmic bounds. In particular,
Klartag proved
\[
    L_n\leq C\sqrt{\log n}
\]
in \cite{Klartag2023}. A key ingredient in his argument is the
improved log-concave Lichnerowicz inequality: if $\nu$ is
$t$-uniformly log-concave, then
\[
    C_P(\nu)
    \leq
    \sqrt{
        \frac{\|\Cov(\nu)\|_{\mathrm{op}}}{t}
    }.
\]
Shortly afterwards, Guan in \cite{Guan2024} obtained the striking estimate
\[
    L_n\leq C\log\log n.
\]
Building on Guan's covariance estimate, Klartag and
Lehec completed the affirmative resolution of Bourgain's slicing
problem \cite{KlartagLehec2025}. Their proof combines Milman's theory
of $M$-ellipsoids, Eldan's stochastic localization, Guan's estimate,
and stability estimates for the Shannon--Stam inequality.

Bizeul subsequently gave a different proof of the slicing conjecture
based on exponential small-ball estimates for isotropic log-concave
measures \cite{Bizeul2025}. His argument also uses stochastic
localization together with Guan's bound, but the final passage from
small-ball probabilities to the slicing conjecture follows the
reduction of Dafnis and Paouris through Milman's theory of
$M$-ellipsoids. This provides a conceptually different interpretation
of the information supplied by the stochastic localization process:
a sufficiently strong lower-tail estimate for the Euclidean norm
already forces the isotropic constant to be universally bounded.

The purpose of the present note is to give a genuinely deterministic route to the slicing theorem. The argument does not use stochastic localization, martingales, Itô calculus, or any other form of stochastic calculus, and it is independent of Guan's covariance estimate. It also avoids the theory of M-ellipsoids and regular positions altogether; in particular, no use is made of Pisier's machinery, complex interpolation, or the entropy and covering arguments underlying that theory. The only deformation considered in the proof is a deterministic
one-parameter quadratic perturbation of the measure
\[
    d\mu_t(x)
    =
    \frac{e^{-t|x|^2/2}}{Z_t}\,d\mu(x),
    \qquad
    Z_t
    =
    \int_{\R^n}e^{-t|x|^2/2}\,d\mu(x), \qquad A_t=\Cov(\mu_t)
\]
and the proof proceeds from a direct analysis of the covariance along this one-parameter family, followed by a geometric argument based on $L_p$-centroid bodies.  The idea of
using deterministic perturbations in connection with the slicing
problem has an important precedent in Klartag's work
\cite{Klartag2006ConvexPerturbations}, where convex perturbations were
used in the proof of the bound $L_n\lesssim n^{1/4}$, although the perturbation
used here is of a different nature. 

We prove that there are universal
constants $c_1,c_2,\beta,t_0>0$ and a subspace $F\subseteq\R^n$ with
\[
    \dim F\geq\beta n
\]
such that
\[
    c_1\Id_F
    \preceq
    A_{t_0}P_F
    \preceq
    c_2\Id_F.
\]
The quantity used to detect this phenomenon is 
\[
    \Phi(t)
    =
    \Tr\bigl(\sqrt{\Id+A_t}-\Id\bigr)
    =
    \sum_{i=1}^n
    \left(
        \sqrt{1+\lambda_i(A_t)}-1
    \right).
\]
Since $A_0=\Id$,
\[
    \Phi(0)=(\sqrt2-1)n.
\]
The square-root potential is adapted to the available Poincar\'e
estimate. Differentiation introduces the matrix
\[
    (\Id+A_t)^{-1/2}.
\]
On a spectral block where $A_t$ has size $s$, this contributes a
factor of order $s^{-1/2}$, while the improved Lichnerowicz
inequality contributes the complementary factor
$\sqrt{s/t}$. After a dyadic decomposition of the covariance
spectrum, this matching yields
\[
    |\Phi'(t)|
    \leq
    Cn\,t^{-3/4}.
\]
The singularity at the origin is integrable, and hence, for a
sufficiently small universal $t_0$,
\[
    \Phi(t_0)\geq cn.
\]
Together with the Brascamp--Lieb bound
$A_{t_0}\preceq t_0^{-1}\Id$, this gives the required
result for the subspaces.

From this point the argument has two geometric steps. First, the above implies a dimension-proportional exponential small-ball
estimate for every isotropic log-concave measure. Such a small-ball
estimate is already sufficient to conclude the slicing conjecture via
the reduction of Dafnis and Paouris \cite{DafnisPaouris2010}. We instead give a direct
completion in order to keep the argument essentially self-contained.
Using the small-ball estimate, we obtain lower Hessian bounds for
$L_p$-centroid bodies on subspaces of codimension $O(p)$. A weighted
dyadic combination of these bodies produces a norm whose squared
support function has Hessian eigenvalues satisfying
\[
    \lambda_i\gtrsim i^2.
\]
The resulting determinant estimate is then converted, through
Legendre duality into a
volume estimate. The reverse Santal\'o and Rogers--Shephard
inequalities complete the proof. Note that for the reverse Santal\'o there is a purely
convex geometric proof in \cite{GiannopoulosPaourisVritsiou2014}, which is based on Klartag's work in \cite{Klartag2006ConvexPerturbations}. 

We finally mention a recent development of a different nature. Chen
and Klartag in \cite{ChenKlartag2026} have obtained the sharp thin-shell inequality by a
deterministic argument based on log-concave moment measures, a
weighted Riemannian structure, and the Monge--Amp\`ere equation. 

\section{Preliminaries}
\label{sec:preliminaries}

We collect here the few analytic and geometric facts that will be used
throughout the proof.

\paragraph{Uniform log-concavity and Poincar\'e inequalities.}
A probability measure $\nu$ on $\R^d$ is called
$t$-uniformly log-concave if it has a density of the form
\[
    d\nu(x)
    =
    e^{-t|x|^2/2}g(x)\,dx,
\]
up to normalization, where $g$ is log-concave. Equivalently, if
$d\nu=e^{-V}dx$, then
\[
    x\longmapsto V(x)-\frac{t}{2}|x|^2
\]
is convex. In the smooth case this is equivalent to
\[
    \nabla^2V\succeq t\Id.
\]
We denote by $C_P(\nu)$ the optimal constant in the Poincar\'e
inequality
\[
    \Var_\nu(f)
    \leq
    C_P(\nu)\int |\nabla f|^2\,d\nu.
\]
We shall repeatedly use the Brascamp--Lieb variance inequality
\cite{BrascampLieb1976}. If
\[
    d\nu(x)=Z^{-1}e^{-V(x)}\,dx
\]
with $V$ smooth and strictly convex, then
\begin{equation}
\label{eq:BL-prelim}
    \Var_\nu(f)
    \leq
    \int
    \left\langle
        (\nabla^2V)^{-1}\nabla f,\nabla f
    \right\rangle
    d\nu.
\end{equation}
The general log-concave case follows by approximation. In particular,
if $\nu$ is $t$-uniformly log-concave, then
\begin{equation}
\label{eq:classical-Lichnerowicz}
    C_P(\nu)\leq\frac1t.
\end{equation}
Applying \eqref{eq:BL-prelim} to linear functions also gives
\begin{equation}
\label{eq:BL-cov-prelim}
    \Cov(\nu)\preceq\frac1t\Id.
\end{equation}

The covariance-sensitive improvement of
\eqref{eq:classical-Lichnerowicz} that we need is due to Klartag
\cite[Theorem~1.3]{Klartag2023}:
\begin{equation}
\label{eq:improved-Lichnerowicz-prelim}
    C_P(\nu)
    \leq
    \sqrt{
        \frac{\|\Cov(\nu)\|_{\mathrm{op}}}{t}
    }.
\end{equation}
Notice that \eqref{eq:BL-cov-prelim} recovers from this the classical
bound $C_P(\nu)\leq t^{-1}$. In the proof below the genuinely improved
estimate \eqref{eq:improved-Lichnerowicz-prelim} is used only when the
covariance-sensitive factor
$\|\Cov(\nu)\|_{\mathrm{op}}^{1/2}$ is essential; the estimate
$C_P(\nu)\leq t^{-1}$ itself follows directly from
Brascamp--Lieb.

\paragraph{Gaussian concentration.}
The Bakry--\'Emery criterion \cite{BakryEmery1985} implies that a
$t$-uniformly log-concave probability measure satisfies the
logarithmic Sobolev inequality
\[
    \operatorname{Ent}_\nu(h^2)
    \leq
    \frac{2}{t}
    \int |\nabla h|^2\,d\nu.
\]
The standard Herbst argument then yields the following Gaussian
concentration estimate: for every $1$-Lipschitz function
$F:\R^d\to\R$ and every $s>0$,
\begin{equation}
\label{eq:Gaussian-concentration-prelim}
    \nu\{F\leq \E_\nu F-s\}
    \leq
    \exp\left(-\frac{ts^2}{2}\right),
\end{equation}
and similarly for the upper tail. We refer also to
\cite{Ledoux2001} for this standard implication.
\paragraph{A one-dimensional moment comparison.}
We shall need the following elementary consequence of log-concavity.
If $Z$ is a centered log-concave real random variable and
$p\geq q\geq1$, then
\begin{equation}
\label{eq:moment-comparison-prelim}
    \|Z\|_p
    \leq
    \frac{p}{q}\|Z\|_q.
\end{equation}
See \cite[Theorem~1.2]{Murawski2026}.
\paragraph{Differentiation of spectral functions and max-min theorem}
We shall use the standard differentiation rule for spectral functions
of symmetric matrices, see \cite[Theorem~1.1]{Lewis1996}. Let $\varphi$ be continuously differentiable
on an interval containing the spectrum of a symmetric matrix $A$, and
set
\[
    \mathcal F(A)=\Tr\varphi(A).
\]
Then, for every symmetric matrix $H$,
\begin{equation}
\label{eq:spectral-differentiation-prelim}
    D\mathcal F(A)[H]
    =
    \Tr\bigl(\varphi'(A)H\bigr).
\end{equation}
Moreover, we will use  the
Courant--Fischer formula
\[
    \lambda_{d+1}(H)
    =
    \max_{\substack{E\subseteq\R^n\\\dim E=n-d}}
    \ \min_{\substack{v\in E\\|v|=1}}
    \langle Hv,v\rangle
\]
Finally, we use the standard duality facts for a norm and its dual:
if $R$ is an origin-symmetric convex body and
\[
    G(u)=\frac12h_R(u)^2,
\]
then
\[
    G^*(x)=\frac12p_R(x)^2.
\]
At points of twice differentiability,
\[
    D^2G^*(x)
    =
    \bigl[D^2G(\nabla G^*(x))\bigr]^{-1}.
\]
These are standard facts of Legendre duality; see, for example,
\cite{Rockafellar1970}.
\section{The uniformly log-concave deformation}

Let $\mu$ be an even isotropic log-concave probability measure on
$\R^n$. For $t\geq0$, define
\[
    d\mu_t(x)
    =
    \frac{e^{-t|x|^2/2}}{Z_t}\,d\mu(x),
    \qquad
    Z_t
    =
    \int_{\R^n} e^{-t|x|^2/2}\,d\mu(x),
\]
and write
\[
    A_t=\Cov(\mu_t).
\]
Since $\mu_t$ is even, its barycenter is the origin.

The main quantity that we will use is 
\[
    \Phi(t)
    =
    \Tr\bigl(\sqrt{\Id+A_t}-\Id\bigr).
\]

We first prove the following differential estimate.

\begin{lemma}
\label{lem:central-differential}
For every $0<t\leq1$,
\begin{equation}
\label{eq:Phi-derivative-bound}
    |\Phi'(t)|
    \leq
    Cn\,t^{-3/4}.
\end{equation}
\end{lemma}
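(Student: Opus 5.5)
The plan is to compute $\Phi'(t)$ as a single covariance under $\mu_t$ and then bound it by Cauchy--Schwarz. The variance on the side carrying the weight $(\Id+A_t)^{-1/2}$ will be split into dyadic spectral blocks and estimated with the improved Lichnerowicz inequality \eqref{eq:improved-Lichnerowicz-prelim}.

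\textbf{Step 1: the derivative.} For any test function $f$ one has $\frac{d}{dt}\E_{\mu_t}f=-\frac12\Cov_{\mu_t}(f,|x|^2)$. Since $\mu_t$ is even, its barycenter is the origin, so
\[
\dot A_t=-\tfrac12\,\E_{\mu_t}\bigl[x\otimes x\,(|x|^2-\E_{\mu_t}|x|^2)\bigr].
\]
Applying \eqref{eq:spectral-differentiation-prelim} with $\varphi(s)=\sqrt{1+s}-1$, whose derivative is $\varphi'(s)=\frac12(1+s)^{-1/2}$, gives
\[
\Phi'(t)=-\tfrac14\,\Cov_{\mu_t}\bigl(f,g\bigr),\qquad f(x)=\langle (\Id+A_t)^{-1/2}x,x\rangle,\quad g(x)=|x|^2 .
\]
This step is routine, modulo justifying differentiation under the integral, which holds because $\mu$ has exponential tails.

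\textbf{Step 2: the bound on $\Var g$.} The same formula with $f=|x|^2$ gives $\frac{d}{dt}\Tr A_t=-\frac12\Var_{\mu_t}|x|^2\le 0$. Hence $\Tr A_t\le \Tr A_0=n$. The measure $\mu_t$ is $t$-uniformly log-concave, so by \eqref{eq:classical-Lichnerowicz} we get
\[
\Var_{\mu_t}(g)\le t^{-1}\E_{\mu_t}|2x|^2\le 4n/t .
\]

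\textbf{Step 3: dyadic decomposition of $f$.} Let $E_0$ be the spectral subspace of $A_t$ for eigenvalues in $[0,1]$. For $k\ge1$, let $E_k$ be the spectral subspace for eigenvalues in $(2^{k-1},2^k]$; only $k\lesssim\log(1/t)$ occurs, by \eqref{eq:BL-cov-prelim}. Write $d_k=\dim E_k$, $s_k=2^k$, and $P_k$ for the orthogonal projection onto $E_k$. Since $A_t$ commutes with each $P_k$,
\[
f=\sum_k f_k,\qquad f_k(x)=\langle (\Id+A_t)^{-1/2}P_kx,P_kx\rangle ,
\]
and
\[
|\Phi'(t)|\le\tfrac14\sqrt{\Var g}\,\sum_k\sqrt{\Var f_k}.
\]
The function $f_k$ depends only on $P_kx$. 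The pushforward of $\mu_t$ under $P_k$ is again $t$-uniformly log-concave, since marginals of $e^{-t|x|^2/2}\times$(log-concave) are of the same type by Prékopa--Leindler. Its covariance $P_kA_tP_k$ has operator norm at most $s_k$. Moreover $|\nabla f_k|^2\le 4(1+s_k/2)^{-1}|P_kx|^2$ and $\E|P_kx|^2\le s_kd_k$. Then \eqref{eq:improved-Lichnerowicz-prelim} gives
\[
\Var f_k\lesssim \sqrt{s_k/t}\;(1+s_k)^{-1}s_kd_k,\qquad\text{so}\qquad \sqrt{\Var f_k}\lesssim t^{-1/4}\min(s_k,s_k^{1/2})^{1/2}\,s_k^{1/4}\sqrt{d_k}.
\]
Concretely, this is $\lesssim t^{-1/4}s_k^{1/4}\sqrt{d_k}$ for $k\ge1$, and $\lesssim t^{-1/4}\sqrt n$ for the block $k=0$.

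\textbf{Step 4: summation (the main point).} It remains to show $\sum_{k\ge1}s_k^{1/4}\sqrt{d_k}\lesssim\sqrt n$; this is where the $s^{-1/2}$ weight must exactly compensate the $\sqrt{s/t}$ from Lichnerowicz. We use the trace bound from Step 2: $d_k s_k/2\le\Tr A_t\le n$, so $\sqrt{d_k}\le\sqrt{2n}\,2^{-k/2}$. The sum is then at most $\sqrt{2n}\sum_k 2^{-k/4}\lesssim\sqrt n$, a geometric series independent of the number of blocks. Combining with Step 2 yields
\[
|\Phi'(t)|\lesssim \sqrt{n/t}\cdot t^{-1/4}\sqrt n=Cn\,t^{-3/4}.
\]

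The delicate points I expect are, first, confirming that the improved Lichnerowicz inequality applies to the projected marginal with the block-restricted covariance norm $s_k$. Second, and more importantly, one needs the monotonicity $\Tr A_t\le n$, which is what makes the large-eigenvalue blocks summable. If block-wise marginals caused trouble, the fallback would be to apply \eqref{eq:improved-Lichnerowicz-prelim} on $\R^n$ directly to $\mu_t$ with $\|A_t\|_{\mathrm{op}}$. That would lose the block structure, however, so the marginal argument is the one I would pursue.
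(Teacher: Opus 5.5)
Your proposal is correct and follows the paper's proof essentially step for step. Both arguments use the covariance formula $\Phi'(t)=-\tfrac14\Cov_{\mu_t}(\langle Bx,x\rangle,|x|^2)$, Cauchy--Schwarz with Minkowski, and the same dyadic spectral blocks, with Klartag's improved Lichnerowicz inequality applied to the $t$-uniformly log-concave marginals and the monotonicity $\Tr A_t\le n$ making the blocks summable; for $\Var|x|^2$ you use $C_P\le 1/t$ directly, which is exactly what the paper's appeal to the improved inequality with $\|A_t\|_{\mathrm{op}}\le t^{-1}$ reduces to. There are two harmless slips. The intermediate factor $\min(s_k,s_k^{1/2})^{1/2}$ should read $\bigl(s_k/(1+s_k)\bigr)^{1/2}\le 1$, which is what your ``concretely'' bound $t^{-1/4}s_k^{1/4}\sqrt{d_k}$ actually uses. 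For $k=0$ the gradient factor is $\le 1$ rather than $(1+s_0/2)^{-1}$.
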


\begin{proof}
We first observe that
\begin{equation}
\label{eq:second-moment-decreasing}
    \frac{d}{dt}
    \int_{\R^n}|x|^2\,d\mu_t(x)
    =
    -\frac12
    \int_{\R^n}
    \left(
        |x|^2
        -
        \int_{\R^n}|y|^2\,d\mu_t(y)
    \right)^2
    d\mu_t(x)
    \leq0.
\end{equation}
Since $\mu=\mu_0$ is isotropic,
\begin{equation}
\label{eq:trace-bound-integral}
    \int_{\R^n}|x|^2\,d\mu_t(x)
    \leq
    \int_{\R^n}|x|^2\,d\mu_0(x)
    =
    n.
\end{equation}
On the other hand, $\mu_t$ is $t$-uniformly log-concave. Indeed, if
$d\mu=e^{-V(x)}\,dx$ with $V$ convex, then the potential of $\mu_t$
is
\[
    V_t(x)
    =
    V(x)+\frac{t}{2}|x|^2,
\]
and hence
\[
    \nabla^2V_t\succeq t\Id.
\]
Therefore the Brascamp--Lieb inequality, applied to the linear
function $x\mapsto\langle x,u\rangle$, gives
\[
    \langle A_tu,u\rangle
    =
    \Var_{\mu_t}\langle X,u\rangle
    \leq
    \frac{|u|^2}{t}.
\]
Since this holds for every $u\in\R^n$,
\begin{equation}
\label{eq:BL-covariance}
    A_t\preceq t^{-1}\Id.
\end{equation}
We next differentiate $\Phi$.  Consider
\[
    F(A)
    :=
    \Tr\!\left((I+A)^{1/2}-I\right).
\]
If $\lambda(A)=(\lambda_1(A),\ldots,\lambda_n(A))$, then
\[
    F(A)=g(\lambda(A)),
\]
where
\[
    g(x_1,\ldots,x_n)
    :=
    \sum_{i=1}^n\left(\sqrt{1+x_i}-1\right).
\]
The function $g$ is symmetric and continuously differentiable on
$(-1,\infty)^n$.  Since $A_t$ is positive semidefinite, all its
eigenvalues belong to this interval.  Write
\[
    A_t
    =
    U\operatorname{diag}(\lambda_1,\ldots,\lambda_n)U^T.
\]
By \cite[Theorem~1.1]{Lewis1996},
\begin{align*}
    \nabla F(A_t)
    &=
    U\operatorname{diag}
    \left(
        \frac{1}{2\sqrt{1+\lambda_1}},
        \ldots,
        \frac{1}{2\sqrt{1+\lambda_n}}
    \right)U^T  \\
    &=
    \frac12(I+A_t)^{-1/2}.
\end{align*}
Here the gradient is with respect to the Hilbert--Schmidt inner product.
Consequently, by the chain rule,
\begin{align}
    \Phi'(t)
    &=
    \left\langle\nabla F(A_t),A_t'\right\rangle_{\mathrm{HS}}
    \nonumber\\
    &=
    \frac12
    \Tr\!\left((I+A_t)^{-1/2}A_t'\right),
\label{eq:Phi-derivative}
\end{align}
where we used
$\langle C,D\rangle_{\mathrm{HS}}=\Tr(CD)$ for symmetric matrices. 
Set
\[
    B_t=(\Id+A_t)^{-1/2}
\]
and, for $s\geq0$, define
\[
    H_t(s)=\Tr(B_tA_s).
\]
Since $B_t$ is fixed with respect to $s$,
\[
    \frac{d}{ds}H_t(s)=\Tr(B_tA_s'),
\]
and therefore
\[
    \Phi'(t)
    =
    \frac12
    \left.\frac{d}{ds}H_t(s)\right|_{s=t}.
\]
Since $\mu_s$ is centered,
		\[
		A_s
		=
		\Cov(\mu_s)
		=
		\int_{\R^n} x\otimes x\,d\mu_s(x).
		\]
		Therefore,
		\[
		H_t(s)
		=
		\Tr(B_tA_s)
		=
		\int_{\R^n}
		\Tr\!\bigl(B_t(x\otimes x)\bigr)\,d\mu_s(x).
		\]
		For every $x\in\R^n$,
		\[
		\Tr\!\bigl(B_t(x\otimes x)\bigr)
		=
		\langle B_tx,x\rangle.
		\]
		Hence
		\[
		H_t(s)
		=
		\int_{\R^n}
		\langle B_tx,x\rangle\,d\mu_s(x).
		\]
Using the definition of $\mu_s$, we may write
\[
    H_t(s)
    =
    \frac{
        \displaystyle
        \int_{\R^n}
        \langle B_tx,x\rangle
        e^{-s|x|^2/2}\,d\mu(x)
    }{
        \displaystyle
        \int_{\R^n}
        e^{-s|x|^2/2}\,d\mu(x)
    }.
\]
Consequently,
\begin{align}
    \Phi'(t)
    &=
    \frac12 H_t'(t)
    \nonumber\\
    &=
    \frac12
    \left.
    \frac{d}{ds}
    \frac{
        \displaystyle
        \int_{\R^n}
        \langle B_tx,x\rangle
        e^{-s|x|^2/2}\,d\mu(x)
    }{
        \displaystyle
        \int_{\R^n}
        e^{-s|x|^2/2}\,d\mu(x)
    }
    \right|_{s=t}.
\end{align}
For notational convenience below  we set $B:=B_t$. Differentiating the quotient gives
\begin{align}
\label{eq:Phi-integral}
    \Phi'(t)
    =
    -\frac14
    \Bigg[
        &\int_{\R^n}
        \langle Bx,x\rangle |x|^2\,d\mu_t(x)
        \nonumber\\
        &-
        \left(
            \int_{\R^n}\langle Bx,x\rangle\,d\mu_t(x)
        \right)
        \left(
            \int_{\R^n}|x|^2\,d\mu_t(x)
        \right)
    \Bigg].
\end{align}
Equivalently,
\begin{align}
\label{eq:Phi-centered-integral}
    \Phi'(t)
    =
    -\frac14
    \int_{\R^n}
    &
    \left(
        \langle Bx,x\rangle
        -
        \int_{\R^n}\langle By,y\rangle\,d\mu_t(y)
    \right)
    \nonumber\\
    &\times
    \left(
        |x|^2
        -
        \int_{\R^n}|y|^2\,d\mu_t(y)
    \right)
    d\mu_t(x).
\end{align}
Hence, by Cauchy--Schwarz,
\begin{equation}
\label{eq:Phi-CS}
    |\Phi'(t)|
    \leq
    \frac14 UV,
\end{equation}
where
\begin{equation}
\label{eq:def-U}
    U^2
    =
    \int_{\R^n}
    \left(
        \langle Bx,x\rangle
        -
        \int_{\R^n}\langle By,y\rangle\,d\mu_t(y)
    \right)^2
    d\mu_t(x)
\end{equation}
and
\begin{equation}
\label{eq:def-V}
    V^2
    =
    \int_{\R^n}
    \left(
        |x|^2
        -
        \int_{\R^n}|y|^2\,d\mu_t(y)
    \right)^2
    d\mu_t(x).
\end{equation}
We first estimate $U$. Recall that the covariance ellipsoid $Z_2(\mu_t)$ has support
function
\[
    h_{Z_2(\mu_t)}(u)^2
    =
    \langle A_tu,u\rangle.
\]
We group its principal axes according to their lengths. Let $E_0$
be the span of the principal axes of $Z_2(\mu_t)$ of length at most
$1$, and, for $j\geq1$, let $E_j$ be the span of those whose lengths
belong to
\[
    \left(2^{(j-1)/2},2^{j/2}\right].
\]
Let $P_j$ denote the orthogonal projection onto $E_j$, and put
\[
    r_j=\dim E_j.
\]
Thus
\begin{equation}
\label{eq:Aj-upper}
    A|_{E_j}\preceq 2^j\Id_{E_j},
    \qquad j\geq0.
\end{equation}
For $j\geq1$, every principal axis belonging to $E_j$ has squared
length larger than $2^{j-1}$. Since
\[
    \sum_{i=1}^n\lambda_i(A)
    =
    \int_{\R^n}|x|^2\,d\mu_t(x)
    \leq n
\]
by \eqref{eq:trace-bound-integral}, we obtain that for all $j\geq 1$
\begin{equation}
\label{eq:rj-decay}
    r_j
    \leq
    2^{1-j}n.
\end{equation}
The marginal $(P_j)_*\mu_t$ is again $t$-uniformly log-concave.
Indeed, writing the density of $\mu_t$ as
\[
    e^{-t|x|^2/2}g(x),
\]
with $g$ log-concave, its marginal on $E_j$ has density proportional
to
\[
    e^{-t|z|^2/2}
    \int_{E_j^\perp}
    e^{-t|w|^2/2}g(z+w)\,dw,
\]
and the second factor is log-concave by Pr\'ekopa's theorem.
Together with \eqref{eq:Aj-upper}, Klartag's improved Lichnerowicz
inequality gives
\begin{equation}
\label{eq:CP-Ej}
    C_P\bigl((P_j)_*\mu_t\bigr)
    \leq
    \sqrt{\frac{2^j}{t}}.
\end{equation}
Since $B=(\Id+A)^{-1/2}$ is a function of $A$, each $E_j$ is also
$B$-invariant. Let $B_j$ denote the restriction of $B$ to $E_j$.
Then
\[
    \langle Bx,x\rangle
    =
    \sum_{j\geq0}
    \langle B_jP_jx,P_jx\rangle.
\]
Consequently, by Minkowski's inequality,
\begin{align}
\label{eq:U-Minkowski}
    U
    \leq
    \sum_{j\geq0}
    \Bigg[
    \int_{\R^n}
    \bigg(
        \langle B_jP_jx,P_jx\rangle
        -
        \int_{\R^n}
        \langle B_jP_jy,P_jy\rangle\,d\mu_t(y)
    \bigg)^2
    d\mu_t(x)
    \Bigg]^{1/2}.
\end{align}
Applying the Poincar\'e inequality
\eqref{eq:CP-Ej} to the function
\[
    z\longmapsto\langle B_jz,z\rangle
\]
gives
\begin{align}
\label{eq:quadratic-Ej}
    \int_{\R^n}
    \bigg(
        \langle B_jP_jx,P_jx\rangle
        -
        \int_{\R^n}
        \langle B_jP_jy,P_jy\rangle\,d\mu_t(y)
    \bigg)^2
    d\mu_t(x)
    \leq
    4\sqrt{\frac{2^j}{t}}
    \int_{\R^n}|B_jP_jx|^2\,d\mu_t(x).
\end{align}
If $\lambda$ is an eigenvalue of $A$ on $E_j$, the corresponding
eigenvalue of $B$ is $(1+\lambda)^{-1/2}$. 
Let $(e_i)_{i=1}^n$ be an orthonormal eigenbasis of $A_t$, so that
\[
A_te_i=\lambda_i e_i.
\]
Since
\(
B=(\Id+A_t)^{-1/2},
\)
we also have
\[
Be_i=(1+\lambda_i)^{-1/2}e_i.
\]
Therefore
\[
|BP_jx|^2
=
\sum_{i:\,e_i\in E_j}
\frac{\langle x,e_i\rangle^2}{1+\lambda_i}.
\]
Using that $\mu_t$ is centered and $A_t=\Cov(\mu_t)$,
\[
\int_{\R^n}\langle x,e_i\rangle^2\,d\mu_t(x)
=
\lambda_i,
\]
and hence
\[
\int_{\R^n}|BP_jx|^2\,d\mu_t(x)
=
\sum_{i:\,e_i\in E_j}
\frac{\lambda_i}{1+\lambda_i}
\leq r_j.
\]

Hence
\[
    \int_{\R^n}|B_jP_jx|^2\,d\mu_t(x)
    =
    \sum_{\lambda_i\in E_j}
    \frac{\lambda_i}{1+\lambda_i}
    \leq
    r_j.
\]
It follows from \eqref{eq:quadratic-Ej} that
\begin{equation}
\label{eq:Uj-bound}
    \Bigg[
    \int_{\R^n}
    \bigg(
        \langle B_jP_jx,P_jx\rangle
        -
        \int_{\R^n}
        \langle B_jP_jy,P_jy\rangle\,d\mu_t(y)
    \bigg)^2
    d\mu_t(x)
    \Bigg]^{1/2}
    \leq
    2t^{-1/4}2^{j/4}\sqrt{r_j}.
\end{equation}

For $j=0$, this is bounded by
\[
    2t^{-1/4}\sqrt n.
\]
For $j\geq1$, using \eqref{eq:rj-decay},
\[
    2^{j/4}\sqrt{r_j}
    \leq
    \sqrt{2n}\,2^{-j/4}.
\]
Therefore \eqref{eq:U-Minkowski} yields
\begin{align}
    U
    &\leq
    Ct^{-1/4}\sqrt n
    \left(
        1+\sum_{j\geq1}2^{-j/4}
    \right)
    \nonumber\\
    &\leq
    Ct^{-1/4}\sqrt n.
\end{align}
Thus
\begin{equation}
\label{eq:U-final}
    U
    \leq
    Ct^{-1/4}\sqrt n.
\end{equation}
We estimate $V$ directly, without decomposing the covariance
ellipsoid. By \eqref{eq:BL-covariance},
\[
    \|A_t\|_{\mathrm{op}}\leq t^{-1}.
\]
Klartag's improved Lichnerowicz inequality applied to $\mu_t$ itself
therefore gives
\[
    C_P(\mu_t)
    \leq
    \sqrt{
        \frac{\|A_t\|_{\mathrm{op}}}{t}
    }
    \leq
    \frac1t.
\]
Applying Poincar\'e to the function $x\mapsto|x|^2$, and using
\eqref{eq:trace-bound-integral}, we obtain
\begin{align}
    V^2
    &\leq
    \frac1t
    \int_{\R^n}
    \left|\nabla |x|^2\right|^2\,d\mu_t(x)
    \nonumber\\
    &=
    \frac4t
    \int_{\R^n}|x|^2\,d\mu_t(x)
    \leq
    \frac{4n}{t}.
\end{align}
Hence
\begin{equation}
\label{eq:V-final}
    V
    \leq
    2t^{-1/2}\sqrt n.
\end{equation}

Combining \eqref{eq:Phi-CS}, \eqref{eq:U-final}, and
\eqref{eq:V-final}, we conclude that
\[
    |\Phi'(t)|
    \leq
    Cn\,t^{-3/4},
\]
which proves \eqref{eq:Phi-derivative-bound}.
\end{proof}

\begin{theorem}
\label{thm:central-survival}
There exist universal constants
\(
    t_0,c_1,c_2,\beta>0
\)
such that the following holds.

For every even isotropic log-concave probability measure $\mu$ on
$\R^n$, there exists a subspace $F\subseteq\R^n$ satisfying
\[
    \dim F\geq\beta n
\]
and
\begin{equation}
\label{eq:central-survival}
    c_1\Id_F
    \preceq
    A_{t_0}P_F
    \preceq
    c_2\Id_F.
\end{equation}
\end{theorem}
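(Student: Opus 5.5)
Integrating the derivative bound of Lemma~\ref{lem:central-differential} from $0$ to $t_0$ gives
\[
    \Phi(t_0)\;\geq\;\Phi(0)-\int_0^{t_0}Cn\,t^{-3/4}\,dt
    \;=\;(\sqrt2-1)n-4Cn\,t_0^{1/4}.
\]
For this I need $\Phi$ continuous on $[0,t_0]$, which follows from dominated convergence: $A_t\to A_0=\Id$ as $t\to0$, because $\mu$ has finite moments. Fix $t_0\le1$ small enough that $4Ct_0^{1/4}\leq\frac{\sqrt2-1}{2}$. Then
\[
    \Phi(t_0)\geq c_0 n,\qquad c_0=\frac{\sqrt2-1}{2}.
\]

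Next I convert this trace lower bound into a count of eigenvalues. Let $\lambda_1\geq\dots\geq\lambda_n\ge0$ be the eigenvalues of $A_{t_0}$. The Brascamp--Lieb bound \eqref{eq:BL-covariance} gives $\lambda_i\leq t_0^{-1}$, so each term satisfies $\sqrt{1+\lambda_i}-1\leq \sqrt{1+t_0^{-1}}=:M$. Since $\sqrt{1+\lambda}-1\leq \lambda/2$, the eigenvalues below a threshold $c_1$ contribute at most $c_1n/2$ in total. Put $c_1=c_0$, so that this contribution is at most $c_0n/2$. Let $m$ be the number of indices with $\lambda_i> c_1$. Then
\[
    c_0n\leq \Phi(t_0)\leq \frac{c_0 n}{2}+mM,
\]
and hence $m\geq \beta n$ with $\beta=c_0/(2M)$, a universal constant because $t_0$ is universal.

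Let $F$ be the span of the eigenvectors of $A_{t_0}$ whose eigenvalues exceed $c_1$. Then $F$ is $A_{t_0}$-invariant and $\dim F=m\geq\beta n$. On $F$ we have $c_1\Id_F\preceq A_{t_0}|_F\preceq t_0^{-1}\Id_F$, so \eqref{eq:central-survival} holds with $c_2=t_0^{-1}$. The statement writes $A_{t_0}P_F$; because $F$ is invariant, this operator restricted to $F$ coincides with $A_{t_0}|_F$.

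The analytically delicate part was the derivative estimate, which Lemma~\ref{lem:central-differential} already supplies. The remaining point needing care is the counting step. It works only because $\Phi$ tolerates both small and large eigenvalues in a controlled way: small ones contribute linearly in $\lambda$, and large ones are capped by the Brascamp--Lieb bound. Without that upper cap on individual terms, a few huge eigenvalues could produce the whole mass of $\Phi$, and the counting would fail.
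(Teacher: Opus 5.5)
Your proposal is correct and follows essentially the same route as the paper. You integrate the bound of Lemma~\ref{lem:central-differential} to get $\Phi(t_0)\geq c_0 n$, discard the small eigenvalues, cap each remaining term using the Brascamp--Lieb bound $\lambda_i\leq t_0^{-1}$, and count. Your explicit threshold $c_1=c_0$, obtained from $\sqrt{1+\lambda}-1\leq\lambda/2$, and your remark on the continuity of $\Phi$ at $0$ are only minor refinements of the paper's choice of $\delta$ and its implicit integration from $0$.
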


\begin{proof}
Since $A_0=\Id$,
\begin{equation}
\label{eq:Phi-zero}
    \Phi(0)
    =
    (\sqrt2-1)n.
\end{equation}
By Lemma~\ref{lem:central-differential},
\begin{align}
    \Phi(t)
    &\geq
    \Phi(0)
    -
    Cn\int_0^t s^{-3/4}\,ds
    \nonumber\\
    &=
    (\sqrt2-1)n
    -
    C'n\,t^{1/4}.
\end{align}
Choose a sufficiently small universal $t_0>0$ so that
\[
    C't_0^{1/4}
    \leq
    \frac{\sqrt2-1}{2}.
\]
Then
\begin{equation}
\label{eq:Phi-survives}
    \Phi(t_0)
    \geq
    c_0n,
    \qquad
    c_0=\frac{\sqrt2-1}{2}.
\end{equation}

Let $\lambda_1,\ldots,\lambda_n$ be the eigenvalues of $A_{t_0}$.
Choose a sufficiently small universal $\delta>0$ such that
\[
    \sqrt{1+\delta}-1
    \leq
    \frac{c_0}{2}.
\]
Set
\[
    I
    =
    \{i:\lambda_i\geq\delta\}.
\]
The eigenvalues outside $I$ contribute at most $c_0n/2$ to
$\Phi(t_0)$. Hence \eqref{eq:Phi-survives} gives
\begin{equation}
\label{eq:large-spectrum-contribution}
    \sum_{i\in I}
    \left(
        \sqrt{1+\lambda_i}-1
    \right)
    \geq
    \frac{c_0n}{2}.
\end{equation}
On the other hand, \eqref{eq:BL-covariance} gives
\[
    \lambda_i\leq t_0^{-1},
\]
and therefore
\[
    \sqrt{1+\lambda_i}-1
    \leq
    \sqrt{1+t_0^{-1}}-1
    =:C_0.
\]
Together with \eqref{eq:large-spectrum-contribution}, this implies
\[
    |I|
    \geq
    \frac{c_0}{2C_0}n
    =:\beta n.
\]
Let $F$ be the span of the corresponding principal directions of
$Z_2(\mu_{t_0})$. Then
\[
    \dim F\geq\beta n,
\]
and
\[
    \delta\Id_F
    \preceq
    A_{t_0}P_F
    \preceq
    t_0^{-1}\Id_F.
\]
Thus \eqref{eq:central-survival} holds with
\[
    c_1=\delta,
    \qquad
    c_2=t_0^{-1}.
\]
\end{proof}
\section{A universal small-ball estimate}

We next extract from Theorem~\ref{thm:central-survival} the only
small-ball information needed in the remainder of the proof.

\begin{corollary}
\label{cor:universal-small-ball}
There exist universal constants $\varepsilon_0,c>0$ such that every
centered isotropic log-concave random vector $X$ in $\R^m$ satisfies
\begin{equation}
\label{eq:universal-small-ball}
    \mathbb P\left\{
        |X|\leq\varepsilon_0\sqrt m
    \right\}
    \leq e^{-cm}.
\end{equation}
\end{corollary}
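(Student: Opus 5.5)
The plan is to turn the covariance information of Theorem~\ref{thm:central-survival} into an upper bound on the partition function $Z_t$, and then to use an exponential Chebyshev (tilting) inequality. Theorem~\ref{thm:central-survival} is stated for \emph{even} measures, while the corollary only assumes $X$ centered, so I first symmetrize. Let $X'$ be an independent copy of $X$ and put $Y=(X-X')/\sqrt2$. Then $Y$ is an even log-concave vector, since the convolution of log-concave densities is log-concave. Its covariance is $\Id$, so $Y$ is isotropic. By independence,
\[
\mathbb P\{|X|\le\varepsilon\sqrt m\}^2=\mathbb P\{|X|\le\varepsilon\sqrt m,\ |X'|\le\varepsilon\sqrt m\}\le \mathbb P\{|Y|\le \sqrt2\,\varepsilon\sqrt m\}.
\]
It therefore suffices to prove \eqref{eq:universal-small-ball} for even isotropic $\mu$, with a change of constants.

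Now let $\mu$ be the law of $Y$ on $\R^m$ and run the deformation $\mu_t$ of Section~3 in dimension $m$. Two elementary facts drive the argument. First, differentiating under the integral gives
\[
\frac{d}{dt}\log Z_t=-\frac12\int|x|^2\,d\mu_t=-\frac12\Tr A_t .
\]
Second, by \eqref{eq:second-moment-decreasing}, $t\mapsto\Tr A_t$ is nonincreasing. Theorem~\ref{thm:central-survival} supplies a subspace $F$ with $\dim F\ge\beta m$ on which $A_{t_0}\succeq c_1\Id_F$, and hence $\Tr A_{t_0}\ge c_1\beta m$. By monotonicity, $\Tr A_s\ge c_1\beta m$ for all $s\in[0,t_0]$. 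Integrating from $0$, where $Z_0=1$, gives
\[
\log Z_{t_0}\le -\tfrac12 c_1\beta t_0\, m .
\]

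Finally, I apply Markov's inequality to $e^{-t_0|x|^2/2}$:
\[
\mu\{|x|\le r\}\le e^{t_0r^2/2}\int e^{-t_0|x|^2/2}\,d\mu=e^{t_0r^2/2}Z_{t_0}.
\]
Taking $r=\varepsilon\sqrt m$ with $\varepsilon^2\le c_1\beta/2$ yields
\[
\mu\{|x|\le\varepsilon\sqrt m\}\le e^{-c_1\beta t_0 m/4}.
\]
Combining this with the symmetrization step, with $\varepsilon_0=\varepsilon/\sqrt2$, gives \eqref{eq:universal-small-ball} with $c=c_1\beta t_0/8$.

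The step most likely to need care is the symmetrization, since the whole deformation machinery was set up only for even measures. One must check that $Y$ is genuinely isotropic log-concave and even, and that the squared probability bound only costs constants. The identity for $\frac{d}{dt}\log Z_t$ and the monotonicity of the trace are routine, because all moments involved are finite by log-concavity. Only the lower bound on $A_{t_0}$ from Theorem~\ref{thm:central-survival} is used, not the upper one.
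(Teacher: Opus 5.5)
Your proof is correct. For the even case it takes a genuinely different route from the paper. The symmetrization step is the same as the paper's, which simply performs it at the end rather than the start.

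The paper's route for the even case goes as follows. It passes to the marginal $\nu$ of $\rho_{t_0}$ on the subspace $F$. It then uses the lower covariance bound together with reverse H\"older to get $\E|Y|\ge a\sqrt k$. Next it applies Gaussian concentration to the $t_0$-uniformly log-concave measure $\nu$, which requires Bakry--\'Emery and Herbst, plus Pr\'ekopa to keep uniform log-concavity under marginals. This gives an exponential small-ball bound for $\rho_{t_0}$. Only then does it return to $\rho$ through the density comparison, using merely $Z_{t_0}\le 1$.

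You instead observe that $\mu\{|x|\le r\}\le e^{t_0r^2/2}Z_{t_0}$ and that $\log Z_{t_0}=-\frac12\int_0^{t_0}\Tr A_s\,ds$. So the exponential smallness of the partition function, which the paper's last step discards, already carries the whole estimate. All you need is a linear lower bound on $\Tr A_s$ for $s\in[0,t_0]$. Your argument therefore avoids:
\begin{itemize}
\item Gaussian concentration and reverse H\"older;
\item any use of uniform log-concavity at this stage;
\item the upper bound $A_{t_0}\preceq t_0^{-1}\Id$;
\item even the subspace $F$ itself, since $\sqrt{1+\lambda}-1\le\lambda/2$ gives $\Tr A_{t_0}\ge 2\Phi(t_0)$, which is linear in $m$ by \eqref{eq:Phi-survives}.
\end{itemize}

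In return, the paper's route yields a by-product that yours does not directly give: an exponential small-ball bound for the tilted measure $\rho_{t_0}$ itself and for its marginal on $F$. For the corollary as stated, your proof is shorter and more elementary.
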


\begin{proof}
We first assume that the law $\rho$ of $X$ is even.

Apply Theorem~\ref{thm:central-survival} to $\rho$ in dimension $m$.
There exists a subspace $F\subseteq\R^m$, with
\[
    k:=\dim F\geq\beta m,
\] such that for the marginal $\nu$ of $\rho_{t_0}$ onto $F$ we have
\begin{equation}
\label{eq:eta-covariance}
    c_1\Id_F
    \preceq
    \Cov(\nu)
    \preceq
    c_2\Id_F.
\end{equation}
Since $\rho$ is even, so are $\rho_{t_0}$ and $\nu$. In particular,
$\nu$ is centered. Let $Y$ have distribution $\nu$. Then
\[
    \E|Y|^2
    =
    \Tr\Cov(\nu)
    \geq
    c_1 k.
\]
By reverse H\"older's inequality for seminorms of
log-concave random vectors,
\[
    \bigl(\E|Y|^2\bigr)^{1/2}
    \leq
    C\,\E|Y|.
\]
Consequently, there exists a universal constant $a>0$ such that
\begin{equation}
\label{eq:EY-lower}
    \E|Y|\geq a\sqrt{k}.
\end{equation}

Moreover, $\nu$ is $t_0$-uniformly log-concave. Indeed,
$\rho_{t_0}$ is $t_0$-uniformly log-concave, and this property is
preserved under taking marginals by Pr\'ekopa's theorem. Hence the
Gaussian concentration inequality for uniformly log-concave
measures gives, for every $r>0$,
\[
    \nu\left\{
        |Y|\leq \E|Y|-r
    \right\}
    \leq
    \exp(-c t_0r^2).
\]
Taking
\[
    r=\frac{a}{2}\sqrt{k}
\]
and using \eqref{eq:EY-lower}, we obtain
\begin{equation}
\label{eq:nu-small-ball}
    \nu\left(
        \frac{a}{2}\sqrt{k}\,B_2^F
    \right)
    \leq
    e^{-c_0k},
\end{equation}
where $c_0>0$ is universal.

Choose now a universal $\varepsilon_0>0$ sufficiently small so that
\begin{equation}
\label{eq:epsilon-choices}
    \varepsilon_0
    \leq
    \frac{a\sqrt{\beta}}{2}
    \qquad\text{and}\qquad
    \frac{t_0\varepsilon_0^2}{2}
    \leq
    \frac{c_0\beta}{2}.
\end{equation}
Since $k\geq\beta m$, the first condition in
\eqref{eq:epsilon-choices} implies
\[
    \varepsilon_0\sqrt m
    \leq
    \frac{a}{2}\sqrt{k}.
\]
Therefore, using that orthogonal projection does not increase the
Euclidean norm,
\begin{align}
\label{eq:damped-small-ball}
    \rho_{t_0}
    \left(
        \varepsilon_0\sqrt m\,B_2^m
    \right)
    &\leq
    \nu
    \left(
        \varepsilon_0\sqrt m\,B_2^F
    \right)
    \nonumber\\
    &\leq
    \nu
    \left(
        \frac{a}{2}\sqrt{k}\,B_2^F
    \right)
    \nonumber\\
    &\leq
    e^{-c_0k}
    \leq
    e^{-c_0\beta m}.
\end{align}

We now return from the measure $\rho_{t_0}$ to $\rho$.
By definition,
\[
    d\rho_{t_0}(x)
    =
    \frac{e^{-t_0|x|^2/2}}{Z_{t_0}}\,d\rho(x),
\]
and hence
\[
    d\rho(x)
    =
    Z_{t_0}e^{t_0|x|^2/2}\,d\rho_{t_0}(x).
\]
Since $Z_{t_0}\leq1$, on the ball
$\varepsilon_0\sqrt m\,B_2^m$ we have
\[
    d\rho
    \leq
    e^{t_0\varepsilon_0^2m/2}\,d\rho_{t_0}.
\]
Together with \eqref{eq:damped-small-ball} and the second condition
in \eqref{eq:epsilon-choices}, this gives
\begin{align}
    \rho\left(
        \varepsilon_0\sqrt m\,B_2^m
    \right)
    &\leq
    \exp\left(
        \frac{t_0\varepsilon_0^2}{2}m-c_0\beta m
    \right)
    \nonumber\\
    &\leq
    e^{-c_0\beta m/2}.
\end{align}
Thus \eqref{eq:universal-small-ball} holds when $\rho$ is even.

We finally remove the symmetry assumption. Let $X'$ be an
independent copy of $X$ and set
\[
    W=\frac{X-X'}{\sqrt2}.
\]
The random vector $W$ is even and isotropic. Its law is also
log-concave, since log-concavity is preserved under reflection,
convolution, and linear images. Hence the even case gives
\[
    \mathbb P\left\{
        |W|\leq\varepsilon_0\sqrt m
    \right\}
    \leq
    e^{-cm}.
\]
On the other hand,
\[
    \left\{
        |X|\leq\frac{\varepsilon_0}{\sqrt2}\sqrt m,\,
        |X'|\leq\frac{\varepsilon_0}{\sqrt2}\sqrt m
    \right\}
    \subseteq
    \left\{
        |W|\leq\varepsilon_0\sqrt m
    \right\}.
\]
By independence,
\[
    \mathbb P\left\{
        |X|\leq\frac{\varepsilon_0}{\sqrt2}\sqrt m
    \right\}^2
    \leq
    e^{-cm}.
\]
Taking square roots and renaming the universal constants yields
\eqref{eq:universal-small-ball}.
\end{proof}

For completeness, let us point out that the proof could already be
concluded at this stage by appealing to the reduction of Dafnis and
Paouris, which shows that an exponential small-ball estimate of the
form obtained above implies a universal bound for the isotropic
constant. We prefer not to use this reduction here, since its proof
relies on the theory of $M$-ellipsoids and Pisier's $\alpha$-regular
positions. Instead, in order to keep the remainder of the argument
essentially self-contained and geometric, we derive the slicing bound
directly from the small-ball estimate by means of a weighted
combination of $L_p$-centroid bodies, Legendre duality, and the reverse
Santal\'o inequality.

We now complete the proof of the slicing theorem. By affine
invariance, it is enough to consider a convex body
$K\subseteq\R^n$ whose uniform probability measure $\mu_K$ is
centered and satisfies
\begin{equation}
\label{eq:isotropic-final}
    \Cov(\mu_K)=\Id.
\end{equation}
Let $X\sim\mu_K$.

For $p\geq2$, recall that the $L_p$-centroid body $Z_p(K)$ is the
origin-symmetric convex body whose support function is
\[
    h_{Z_p(K)}(u)
    =
    \left(
        \int_K |\langle x,u\rangle|^p\,d\mu_K(x)
    \right)^{1/p}.
\]
Fix a sufficiently small universal constant $a>0$, and let $p_*$ be
the largest dyadic integer satisfying
\[
    p_*\leq an.
\]
We may clearly assume that $n$ is larger than a universal constant,
so that $p_*\geq4$.

Define the origin-symmetric convex body $R$ by
\begin{equation}
\label{eq:R-final}
    h_R(u)^2
    =
    |u|^2
    +
    \sum_{\substack{p=2^j\\4\leq p\leq p_*}}
    p\,h_{Z_p(K)}(u)^2.
\end{equation}
Set
\[
    K_s=\mathrm{conv}(K\cup(-K)).
\]
Then, 
\[
    h_{Z_p(K)}(u)
    \leq
    h_{K_s}(u)
\]
for every $p\geq2$. Moreover, by isotropicity,
\[
    h_{Z_2(K)}(u)^2
    =
    \int_K|\langle x,u\rangle|^2\,d\mu_K(x)
    =
    |u|^2,
\]
and hence
\[
    |u|\leq h_{K_s}(u).
\]
Since the sum in \eqref{eq:R-final} is dyadic,
\[
    \sum_{\substack{p=2^j\\4\leq p\leq p_*}}p
    \leq 2p_*
    \leq 2an.
\]
It follows that
\[
    h_R(u)^2
    \leq
    Cn\,h_{K_s}(u)^2,
\]
and therefore
\begin{equation}
\label{eq:R-in-Ks}
    R\subseteq C\sqrt n\,K_s.
\end{equation}

Since the barycenter of $K$ is the origin, we have $0\in K$. Thus
\[
    K_s=\mathrm{conv}(K\cup(-K))\subseteq K-K.
\]
Hence, by the Rogers--Shephard inequality,
\[
    |K_s|
    \leq
    |K-K|
    \leq
    \binom{2n}{n}|K|
    \leq
    4^n|K|.
\]
Consequently, \eqref{eq:R-in-Ks} gives
\[
    |R|^{1/n}
    \leq
    C\sqrt n\,|K_s|^{1/n}
    \leq
    C\sqrt n\,|K|^{1/n}.
\]
It is therefore enough to prove
\begin{equation}
\label{eq:R-volume-target}
    |R|^{1/n}\geq c\sqrt n.
\end{equation}
Indeed, \eqref{eq:R-volume-target} would imply
$|K|^{1/n}\geq c$, and since
\[
    L_K=|K|^{-1/n}
\]
under the normalization \eqref{eq:isotropic-final}, this would yield
$L_K\leq C$.

\begin{proposition}
\label{prop:R-volume}
For the body $R$ defined in \eqref{eq:R-final}, one has
\[
    |R|^{1/n}\geq c\sqrt n,
\]
where $c>0$ is a universal constant.
\end{proposition}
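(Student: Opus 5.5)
The plan is to convert the Hessian information on $G(u)=\tfrac12 h_R(u)^2$ into a volume bound through Legendre duality, and then to close with the reverse Santal\'o inequality. Since $G$ is $2$-homogeneous, $\nabla G$ maps $\R^n$ onto $\R^n$ with $G^*(\nabla G(u))=\langle u,\nabla G(u)\rangle-G(u)=G(u)$. Changing variables $x=\nabla G(u)$ in $\int e^{-G^*(x)}dx$, and using $G^*=\tfrac12p_R^2$ from the preliminaries, gives
\[
    c_n|R|=\int_{\R^n}e^{-p_R(x)^2/2}\,dx=\int_{\R^n}e^{-G(u)}\det D^2G(u)\,du\geq \Bigl(\inf_u\det D^2G(u)\Bigr)\,c_n|R^\circ|,
\]
with the same dimensional constant $c_n$ on both sides, because $\int e^{-h_R^2/2}=c_n|R^\circ|$. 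Combining this with the reverse Santal\'o inequality $|R|\,|R^\circ|\geq (c/n)^n$ gives
\[
    |R|^2\geq (c/n)^n\inf_u\det D^2G(u).
\]
It therefore suffices to show that, at every $u$ where $G$ is twice differentiable, the eigenvalues of $D^2G(u)$, arranged in increasing order, satisfy $\lambda_i\gtrsim i^2$. Then $\det D^2G\geq \prod_i(ci^2)\geq (c'n)^{2n}$, and hence $|R|^{1/n}\gtrsim\sqrt n$. The summand $|u|^2$ in $h_R^2$ guarantees $D^2G\succeq\Id$, which handles the bounded range of $i$, and $D^2G$ is $0$-homogeneous in $u$.

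The core step is a Hessian lower bound for a single centroid body. For each dyadic $4\leq p\leq p_*$ and each $u$, I would show that $D^2\bigl(\tfrac12 p\,h_{Z_p(K)}^2\bigr)(u)\succeq c\,p^2\,\Id$ on a subspace $E_p(u)$ with $\codim E_p(u)\leq Cp$. Since all summands are positive semidefinite, Courant--Fischer then gives $\lambda_{Cp+1}(D^2G)\gtrsim p^2$ for every dyadic $p\leq an$. Interpolating between consecutive dyadic values yields $\lambda_i\gtrsim i^2$ for $i\leq n$, after adjusting $a$ and the constants.

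To prove the single-body bound, I would normalize $h_{Z_p}(u)=1$ and compute
\[
    D^2\bigl(\tfrac12 h_{Z_p}^2\bigr)(u)=(p-1)\,\E\bigl[|\langle X,u\rangle|^{p-2}X\otimes X\bigr]-(p-2)\,\nabla h\otimes\nabla h.
\]
The rank-one term only costs one codimension. The main term is $(p-1)$ times the second moment matrix of the tilted measure $d\mu^{(u)}\propto|\langle x,u\rangle|^{p-2}d\mu_K$, multiplied by its total mass $\E|\langle X,u\rangle|^{p-2}=h_{Z_{p-2}}(u)^{p-2}$. By \eqref{eq:moment-comparison-prelim}, this mass is at least $c^{p}$ relative to $h_{Z_p}(u)^{p}=1$, so after taking the correct root it is of constant order. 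The tilted measure is log-concave on each half-space $\{\pm\langle x,u\rangle>0\}$ and even after symmetrization, so its marginal on $u^\perp$ is a (mixture of) log-concave measures. Apply Corollary~\ref{cor:universal-small-ball} in the following way. The tilt is by a density of size at most $e^{O(p)}$ relative to moments of $\mu_K$ in the $u$ direction, and a Paley--Zygmund/Markov argument shows that $\mu^{(u)}$ puts mass at least $e^{-Cp}$ on sets where the tilt is comparable to its mean. Hence any subspace $W\subseteq u^\perp$ on which $\Cov(\mu^{(u)})$ is much smaller than $1$ would have dimension $m$ with $\mu_K(|P_WX|\leq\varepsilon_0\sqrt m)\geq e^{-Cp}$. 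The isotropic small-ball estimate then forces $e^{-cm}\geq e^{-Cp}$, that is, $m\leq Cp$.

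This converts into $\E\bigl[|\langle X,u\rangle|^{p-2}\langle X,v\rangle^2\bigr]\gtrsim 1$ for $v$ in a subspace of codimension $O(p)$. The extra factor $p$ comes from $(p-1)$, and together with the weight $p$ in \eqref{eq:R-final} this gives the order $p^2$.

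I expect the main obstacle to be this transfer of the small-ball estimate through the tilt $|\langle x,u\rangle|^{p-2}$. The density ratio is not bounded, so I would first restrict to the slab $\{|\langle x,u\rangle|\approx h_{Z_p}(u)\}$. On that slab the weight is comparable to its typical value, and the slab has $\mu_K$-mass at least $e^{-Cp}$ by log-concave moment estimates. I would then check that losing a factor $e^{-Cp}$ in probability only costs codimension $O(p)$ in the small-ball step, which is exactly what the dimension-proportional exponent $e^{-cm}$ of Corollary~\ref{cor:universal-small-ball} allows.
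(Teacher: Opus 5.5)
Your proposal is correct and follows the paper's proof essentially step by step. The ingredients coincide: the tilt $d\nu\propto|\langle x,u\rangle|^{p-2}\,d\mu_K$, the Markov bound, the transfer back to $\mu_K$ and Corollary~\ref{cor:universal-small-ball} give $D^2\Phi_p(u)[v,v]\geq c\,p|v|^2$ on a subspace of codimension $O(p)$; the same dyadic Courant--Fischer count gives $\lambda_i\gtrsim i^2$; your identity $\int e^{-p_R^2/2}=\int e^{-G}\det D^2G$ is an integrated form of the paper's change of variables $|R^\circ|=\int_R\det D^2\Psi$; and both close with reverse Santal\'o.

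Three points need tightening (with $m=h_{Z_p(K)}(u)$):
\begin{enumerate}[label=(\roman*)]
\item The mass $W=\E|\langle X,u\rangle|^{p-2}$ enters $Q_{p,u}$ linearly, so a bound of order $c^p$ would be fatal and no root can be taken. You need the sharp case $q=p-2$ of \eqref{eq:moment-comparison-prelim}, which gives $W\geq(1-2/p)^{p-2}m^{p-2}\geq e^{-2}m^{p-2}$.
\item The subspace to control is the small-eigenvalue space of the second-moment form $Q_{p,u}$, not of $\Cov(\nu)$, since $\nu$ need not be centered.
\item The transfer you call the main obstacle is one line of H\"older in the paper, $\int_A|\langle x,u\rangle|^{p-2}\,d\mu_K\leq m^{p-2}\mu_K(A)^{2/p}$, which together with $\nu(A)\geq 3/4$ gives $\mu_K(A)\geq e^{-Cp}$. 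Your slab version also works, but what it needs is that $\nu$ (not $\mu_K$) gives $\{|\langle x,u\rangle|\leq\beta m\}$ mass at least $1-e^2/\beta^2$; on that set $d\nu/d\mu_K\leq e^2\beta^{p-2}$.
\end{enumerate}
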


\begin{proof}
For every dyadic $p$ with $4\leq p\leq p_*$, set
\[
    \Phi_p(u)
    =
    \frac12 h_{Z_p(K)}(u)^2
    =
    \frac12
    \left(
        \int_K|\langle x,u\rangle|^p\,d\mu_K(x)
    \right)^{2/p}.
\]
Also define
\[
    \Phi(u)
    =
    \frac12h_R(u)^2
    =
    \frac12|u|^2
    +
    \sum_{\substack{p=2^j\\4\leq p\leq p_*}}
    p\,\Phi_p(u).
\]
Fix
$p\geq4$ and $u\neq0$, and write
\[
    m
    =
    \left(
        \int_K|\langle x,u\rangle|^p\,d\mu_K(x)
    \right)^{1/p}.
\]
For $v\in\R^n$, define
\[
    Q_{p,u}(v)
    =
    m^{2-p}
    \int_K
    |\langle x,u\rangle|^{p-2}
    \langle x,v\rangle^2\,d\mu_K(x)
\]
and
\[
    \ell_{p,u}(v)
    =
    m^{1-p}
    \int_K
    |\langle x,u\rangle|^{p-2}
    \langle x,u\rangle
    \langle x,v\rangle\,d\mu_K(x).
\]
A direct differentiation gives
\begin{equation}
\label{eq:Phi-p-Hessian-final}
    D^2\Phi_p(u)[v,v]
    =
    (p-1)Q_{p,u}(v)
    -
    (p-2)\ell_{p,u}(v)^2.
\end{equation}
By Cauchy--Schwarz,
\[
    \ell_{p,u}(v)^2
    \leq
    Q_{p,u}(v).
\]
In particular,
\begin{equation}
\label{eq:Phi-p-positive}
    D^2\Phi_p(u)\succeq0.
\end{equation}
We shall use the following lemma, which was also proved in
\cite{BrazitikosPandisMeanGauge}. For completeness, we give a
self-contained proof.
\begin{lemma}
There exists a subspace
$E_{p,u}\subseteq\R^n$ such that
\begin{equation}
\label{eq:Epu-codim-final}
    \codim E_{p,u}\leq C_0p
\end{equation}
and
\begin{equation}
\label{eq:Epu-curvature-final}
    D^2\Phi_p(u)[v,v]
    \geq
    c_0p|v|^2,
    \qquad
    v\in E_{p,u},
\end{equation}
where $c_0,C_0>0$ are universal.

\end{lemma}
\begin{proof} Set
\[
    W
    =
    \int_K
    |\langle x,u\rangle|^{p-2}\,d\mu_K(x).
\]
The one-dimensional reverse moment inequality for centered
log-concave random variables gives
\[
    \| \langle X,u\rangle\|_p
    \leq
    \frac{p}{p-2}
    \|\langle X,u\rangle\|_{p-2}.
\]
Therefore
\[
\begin{aligned}
    W
    =
    \|\langle X,u\rangle\|_{p-2}^{p-2}  
    &\geq
    \left(\frac{p-2}{p}\right)^{p-2}
    m^{p-2}
    \geq
    e^{-2}m^{p-2}.
\end{aligned}
\]
Define a probability measure $\nu$ on $K$ by
\[
    d\nu(x)
    =
    \frac{|\langle x,u\rangle|^{p-2}}{W}\,d\mu_K(x).
\]
Then, by the moment comparison
\begin{equation}
\label{eq:Q-weighted-final}
    Q_{p,u}(v)
    =
    \frac{W}{m^{p-2}}
    \int_K\langle x,v\rangle^2\,d\nu(x)
    \geq
    e^{-2}
    \int_K\langle x,v\rangle^2\,d\nu(x).
\end{equation}
Let $F$ be the spectral subspace of the quadratic form $Q_{p,u}$
corresponding to eigenvalues smaller than a sufficiently small
universal constant $b>0$, and put
\[
    r=\dim F.
\]
We shall show that
\[
    r\leq Cp.
\]
This is immediate if $r=0$, so assume for the moment that $r\geq1$.
Let $e_1,\ldots,e_r$ be an orthonormal basis of $F$.
From
\eqref{eq:Q-weighted-final},
\begin{align}
    \int_K|P_Fx|^2\,d\nu(x)
    &=
    \sum_{j=1}^r
    \int_K\langle x,e_j\rangle^2\,d\nu(x)
    \nonumber\\
    &\leq
    e^2\sum_{j=1}^r Q_{p,u}(e_j)
    \leq
    e^2br.
\label{eq:weighted-second-moment-final}
\end{align}
Let $\varepsilon_0>0$ be the universal constant from
Corollary~\ref{cor:universal-small-ball}, and fix
\[
    0<\delta<\varepsilon_0.
\]
Choosing $b>0$ sufficiently small, depending only on $\delta$,
Markov's inequality and
\eqref{eq:weighted-second-moment-final} give
\[
    \nu\left\{
        |P_Fx|>\delta\sqrt r
    \right\}
    \leq
    \frac{e^2b}{\delta^2}
    \leq
    \frac14.
\]
Hence if \(    A=
    \left\{
        x\in K:
        |P_Fx|\leq\delta\sqrt r
    \right\}\), we have 
\begin{equation}
\label{eq:nu-A-final}
    \nu(A)\geq\frac34.
\end{equation}
By the definition of $\nu$ and
the moment comparison,
\begin{align}
    \int_A
    |\langle x,u\rangle|^{p-2}\,d\mu_K(x)
    &=
    W\nu(A)
    \nonumber\\
    &\geq
    \frac34W
    \geq
    \frac{3}{4e^2}\,m^{p-2}.
\label{eq:weighted-A-lower-final}
\end{align}
On the other hand, H\"older's inequality gives
\begin{align}
    \int_A
    |\langle x,u\rangle|^{p-2}\,d\mu_K(x)
    &\leq
    \left(
        \int_A|\langle x,u\rangle|^p\,d\mu_K(x)
    \right)^{(p-2)/p}
    \mu_K(A)^{2/p}
    \nonumber\\
    &\leq
    m^{p-2}\mu_K(A)^{2/p}.
\label{eq:weighted-A-upper-final}
\end{align}
Comparing
\eqref{eq:weighted-A-lower-final} and
\eqref{eq:weighted-A-upper-final}, we obtain
\begin{equation}
\label{eq:A-lower-final}
    \mu_K(A)\geq e^{-Cp}.
\end{equation}

Since $\mu_K$ is centered and isotropic,
the random vector $P_FX$ is centered and isotropic in the
$r$-dimensional space $F$. It is also log-concave, by Pr\'ekopa's
theorem. Since $\delta<\varepsilon_0$,
Corollary~\ref{cor:universal-small-ball} therefore gives
\begin{equation}
\label{eq:A-upper-final}
    \mu_K(A)
    =
    \mathbb P\left\{
        |P_FX|\leq\delta\sqrt r
    \right\}
    \leq
    e^{-cr}.
\end{equation}
Together with \eqref{eq:A-lower-final}, this yields
\begin{equation}
\label{eq:r-less-p-final}
    r\leq Cp.
\end{equation}
We have thus proved in all cases that
\[
    r=\dim F\leq Cp.
\]
Now set
\[
    E_{p,u}=F^\perp\cap\ker\ell_{p,u}.
\]
Notice that $\ell_{p,u}$ is nonzero, since
\[
    \ell_{p,u}(u)
    =
    m^{1-p}
    \int |\langle x,u\rangle|^p\,d\mu_K(x)
    =
    m>0.
\]
Hence $\ker\ell_{p,u}$ has codimension one, and therefore
\[
    \codim E_{p,u}
    \leq r+1
    \leq Cp+1
    \leq C_0p.
\]
By the definition of $F$, for all  $v\in F^\perp$ we have
\[
    Q_{p,u}(v)\geq b|v|^2.
\]
Thus, for $v\in E_{p,u}$, we have $\ell_{p,u}(v)=0$, and consequently
\[
\begin{aligned}
    D^2\Phi_p(u)[v,v]
    =
    (p-1)Q_{p,u}(v)
\geq
    b(p-1)|v|^2
    \geq
    c_0p|v|^2.
\end{aligned}
\]
This proves \eqref{eq:Epu-codim-final} and
\eqref{eq:Epu-curvature-final}.
\end{proof}
We now combine the information coming from all dyadic scales.
Fix $u\neq0$ and put
\[
    H(u)=D^2\Phi(u).
\]
By \eqref{eq:Phi-p-positive},
\[
    H(u)
    =
    \Id
    +
    \sum_{\substack{p=2^j\\4\leq p\leq p_*}}
    p\,D^2\Phi_p(u)
    \succeq
    \Id.
\]
Moreover, for $v\in E_{p,u}$,
\begin{align}
    \langle H(u)v,v\rangle
    &\geq
    |v|^2
    +
    p\,D^2\Phi_p(u)[v,v]
    \nonumber\\
    &\geq
    \bigl(1+c_0p^2\bigr)|v|^2.
\label{eq:H-on-Epu-final}
\end{align}

Let
\[
    \lambda_1(H(u))
    \leq
    \cdots
    \leq
    \lambda_n(H(u))
\]
be the eigenvalues of $H(u)$. We use the following elementary
consequence of the min--max principle: if a symmetric matrix $H$
satisfies
\[
    \langle Hv,v\rangle\geq L|v|^2
\]
on a subspace of codimension at most $d$, then by the Courant-Fisher formula 
\[
    \#\{i:\lambda_i(H)<L\}\leq d.
\]
Applying this observation to \eqref{eq:H-on-Epu-final} and
\eqref{eq:Epu-codim-final}, we obtain
\begin{equation}
\label{eq:H-counting-final}
    \#\left\{
        i:
        \lambda_i(H(u))<1+c_0p^2
    \right\}
    \leq
    C_0p.
\end{equation}
We now fix the universal constant $a>0$ in the definition of $p_*$.
By decreasing it if necessary, we may assume that
\begin{equation}
\label{eq:a-choice-final}
    C_0a\leq\frac14
    \qquad\text{and}\qquad
    64c_0C_0^2a^2\leq1.
\end{equation}
Since $p_*$ is the largest dyadic integer satisfying $p_*\leq an$,
we have, for $n$ larger than a universal constant,
\begin{equation}
\label{eq:pstar-final}
    \frac{a}{2}n<p_*\leq an.
\end{equation}
In particular,
\[
    C_0p_*
    \leq
    C_0an
    \leq
    \frac n4.
\]
We claim that, for every $1\leq i\leq n$,
\begin{equation}
\label{eq:eigenvalue-quadratic-final}
    \lambda_i(H(u))
    \geq
    \frac{c_0a^2}{4}\,i^2.
\end{equation}
We distinguish three ranges.

First, suppose that
\[
    16C_0\leq i\leq C_0p_*.
\]
Choose a dyadic integer $p$ such that
\[
    \frac{i}{4C_0}
    \leq
    p
    <
    \frac{i}{2C_0}.
\]
Such a $p$ exists by dyadicity. Moreover, $p\geq4$, while
\[
    p
    <
    \frac{i}{2C_0}
    \leq
    \frac{p_*}{2},
\]
so $4\leq p\leq p_*$. Also
\[
    C_0p<\frac i2<i.
\]
Hence \eqref{eq:H-counting-final} implies
\[
    \lambda_i(H(u))
    \geq
    1+c_0p^2
    \geq
    c_0p^2
    \geq
    \frac{c_0}{16C_0^2}\,i^2.
\]
Since $C_0a\leq1/4$, we have
\[
    \frac{1}{16C_0^2}
    \geq
    \frac{a^2}{4},
\]
and therefore
\begin{equation}
\label{eq:eigenvalue-middle-final}
    \lambda_i(H(u))
    \geq
    \frac{c_0a^2}{4}\,i^2.
\end{equation}

Next, suppose that
\[
    i>C_0p_*.
\]
Applying \eqref{eq:H-counting-final} with $p=p_*$ gives
\[
    \lambda_i(H(u))
    \geq
    1+c_0p_*^2
    \geq
    c_0p_*^2.
\]
By \eqref{eq:pstar-final},
\[
    p_*>\frac a2n,
\]
and hence
\[
    \lambda_i(H(u))
    \geq
    \frac{c_0a^2}{4}\,n^2
    \geq
    \frac{c_0a^2}{4}\,i^2,
\]
since $i\leq n$.

Finally, if
\[
    1\leq i<16C_0,
\]
then $H(u)\succeq\Id$, and consequently
\[
    \lambda_i(H(u))\geq1.
\]
On the other hand, by \eqref{eq:a-choice-final},
\[
    \frac{c_0a^2}{4}\,i^2
    \leq
    \frac{c_0a^2}{4}(16C_0)^2
    =
    64c_0C_0^2a^2
    \leq1.
\]
Thus \eqref{eq:eigenvalue-quadratic-final} holds in this range as
well.

We have therefore proved, uniformly for every $u\neq0$, that
\[
    \lambda_i(H(u))
    \geq
    \frac{c_0a^2}{4}\,i^2,
    \qquad
    1\leq i\leq n.
\]
Consequently,
\begin{align}
    \det H(u)
    &=
    \prod_{i=1}^n\lambda_i(H(u))
    \\
    &\geq
    \left(\frac{c_0a^2}{4}\right)^n
    \prod_{i=1}^ni^2
    \\
    &=
    \left(\frac{c_0a^2}{4}\right)^n(n!)^2.
\end{align}
Using $n!\geq(n/e)^n$, we conclude that for $u\neq 0$
\begin{equation}
\label{eq:det-H-final}
    \det H(u)
    \geq
    \left(
        \frac{c_0a^2}{4e^2}\,n^2
    \right)^n.
\end{equation}
We now pass to the Legendre transform
\[
    \Psi=\Phi^*.
\]
Since
\[
    \Phi(u)=\frac12h_R(u)^2
\]
and $h_R=\|\cdot\|_{R^\circ}$, the dual norm of $h_R$ is the
Minkowski functional $\|\cdot\|_R$. Therefore
\begin{equation}
\label{eq:Legendre-norm-final}
    \Psi(x)
    =
    \frac12\|x\|_R^2.
\end{equation}
The Euclidean term in the definition of $\Phi$ gives that for $u\neq 0$
\[
    D^2\Phi(u)\succeq\Id.
\]
Thus $\Phi$ is strictly convex, and $\nabla\Phi$ is invertible away
from the origin, with inverse $\nabla\Psi$. At corresponding points
\[
    x=\nabla\Phi(u),
    \qquad
    u=\nabla\Psi(x),
\]
differentiating these inverse relations yields
\[
    D^2\Psi(x)
    =
    \left[
        D^2\Phi(\nabla\Psi(x))
    \right]^{-1}.
\]
Hence \eqref{eq:det-H-final} gives that for  $x\neq0$
\begin{equation}
\label{eq:Psi-determinant-final}
    \det D^2\Psi(x)
    \leq
    \left(\frac{C}{n^2}\right)^n,
\end{equation}
We next identify the image of $R$ under $\nabla\Psi$. By
\eqref{eq:Legendre-norm-final},
\[
    R
    =
    \left\{
        x:\Psi(x)\leq\frac12
    \right\},
\]
whereas
\[
    R^\circ
    =
    \left\{
        u:\Phi(u)\leq\frac12
    \right\}.
\]
If
\[
    u=\nabla\Psi(x),
\]
then Euler's identity for the $2$-homogeneous function $\Psi$ gives
\[
    \langle x,u\rangle
    =
    \langle x,\nabla\Psi(x)\rangle
    =
    2\Psi(x).
\]
On the other hand, equality in Legendre duality gives
\[
    \Phi(u)+\Psi(x)
    =
    \langle x,u\rangle.
\]
Consequently,
\begin{equation}
\label{eq:equal-levels-final}
    \Phi(u)=\Psi(x).
\end{equation}
Since $\nabla\Phi$ and $\nabla\Psi$ are inverse maps,
\eqref{eq:equal-levels-final} shows that
\begin{equation}
\label{eq:gradient-image-final}
    \nabla\Psi(R)=R^\circ.
\end{equation}
\begin{comment}
All the functions involved are smooth away from the origin: here the
parameters $p$ are even integers and the Euclidean term makes the
Hessian positive definite. Hence $\nabla\Psi$ is a $C^1$
diffeomorphism between the interiors of $R\setminus\{0\}$ and
$R^\circ\setminus\{0\}$. 
\end{comment}
The
change-of-variables formula, together with
\eqref{eq:gradient-image-final}, gives
\begin{align}
    |R^\circ|
    &=
    \int_R
    \det D^2\Psi(x)\,dx
    \nonumber\\
    &\leq
    \left(\frac{C}{n^2}\right)^n|R|.
\label{eq:Rpolar-volume-final}
\end{align}
Finally, the Bourgain--Milman reverse Santal\'o inequality yields
\[
    |R|\,|R^\circ|
    \geq
    c^n|B_2^n|^2.
\]
Combining this with \eqref{eq:Rpolar-volume-final}, we obtain
\[
    c^n|B_2^n|^2
    \leq
    \left(\frac{C}{n^2}\right)^n|R|^2.
\]
Therefore
\[
    |R|^{1/n}
    \geq
    cn\,|B_2^n|^{1/n}.
\]
Since
\[
    |B_2^n|^{1/n}\simeq\frac1{\sqrt n},
\]
we conclude that
\[
    |R|^{1/n}\geq c\sqrt n.
\]
This proves the proposition.
\end{proof}
\paragraph*{Acknowledgements.}
The author is grateful to Apostolos Giannopoulos for valuable comments and suggestions that helped clarify the presentation of the result and streamline the exposition.
\bibliographystyle{plainurl}
\bibliography{slicing_references}

@incollection{BakryEmery1985,
  author    = {Bakry, Dominique and {\'E}mery, Michel},
  title     = {Diffusions hypercontractives},
  booktitle = {S{\'e}minaire de Probabilit{\'e}s XIX, 1983/84},
  editor    = {Az{\'e}ma, Jacques and Yor, Marc},
  series    = {Lecture Notes in Mathematics},
  volume    = {1123},
  pages     = {177--206},
  publisher = {Springer},
  address   = {Berlin},
  year      = {1985},
  doi       = {10.1007/BFb0075847}
}

@article{Klartag2006ConvexPerturbations,
  author  = {Klartag, Bo'az},
  title   = {On convex perturbations with a bounded isotropic constant},
  journal = {Geometric and Functional Analysis},
  volume  = {16},
  number  = {6},
  pages   = {1274--1290},
  year    = {2006},
  doi     = {10.1007/s00039-006-0588-1}
}

@article{Murawski2026,
  author  = {Murawski, Daniel},
  title   = {Comparing moments of real log-concave random variables},
  journal = {Bernoulli},
  volume  = {32},
  number  = {3},
  year    = {2026},
  doi     = {10.3150/25-BEJ1958}
}

@misc{BrazitikosPandisMeanGauge,
  author = {Brazitikos, Silouanos and Pandis, Christos},
  title  = {Geometric Bounds for the Mean Gauge and the Mean Width in Isotropic Position},
  year   = {2026},
  note   = {Preprint},
  url    = {https://arxiv.org/abs/2608.07216}
}

@book{Ledoux2001,
  author    = {Ledoux, Michel},
  title     = {The Concentration of Measure Phenomenon},
  series    = {Mathematical Surveys and Monographs},
  volume    = {89},
  publisher = {American Mathematical Society},
  address   = {Providence, RI},
  year      = {2001}
}

@article{Lewis1996,
  author  = {Lewis, Adrian S.},
  title   = {Derivatives of spectral functions},
  journal = {Mathematics of Operations Research},
  volume  = {21},
  number  = {3},
  pages   = {576--588},
  year    = {1996},
  doi     = {10.1287/moor.21.3.576}
}

@book{Rockafellar1970,
  author    = {Rockafellar, R. Tyrrell},
  title     = {Convex Analysis},
  series    = {Princeton Mathematical Series},
  volume    = {28},
  publisher = {Princeton University Press},
  address   = {Princeton, NJ},
  year      = {1970}
}

@misc{Bizeul2025,
  author = {Bizeul, Pierre},
  title  = {The slicing conjecture via small ball estimates},
  year   = {2025},
  note   = {Preprint; to appear in \emph{Annals of Probability}},
  url    = {https://arxiv.org/abs/2501.06854}
}

@article{Bourgain1986,
  author  = {Bourgain, Jean},
  title   = {On high-dimensional maximal functions associated to convex bodies},
  journal = {American Journal of Mathematics},
  volume  = {108},
  number  = {6},
  pages   = {1467--1476},
  year    = {1986},
  doi     = {10.2307/2374532}
}

@incollection{Bourgain1991,
  author    = {Bourgain, Jean},
  title     = {On the distribution of polynomials on high-dimensional convex sets},
  booktitle = {Geometric Aspects of Functional Analysis (1989--90)},
  series    = {Lecture Notes in Mathematics},
  volume    = {1469},
  pages     = {127--137},
  publisher = {Springer},
  address   = {Berlin},
  year      = {1991},
  doi       = {10.1007/BFb0089219}
}

@article{BrascampLieb1976,
  author  = {Brascamp, Herm Jan and Lieb, Elliott H.},
  title   = {On extensions of the {Brunn--Minkowski} and {Pr{\'e}kopa--Leindler} theorems, including inequalities for log concave functions, and with an application to the diffusion equation},
  journal = {Journal of Functional Analysis},
  volume  = {22},
  number  = {4},
  pages   = {366--389},
  year    = {1976},
  doi     = {10.1016/0022-1236(76)90004-5}
}

@book{BGVV2014,
  author    = {Brazitikos, Silouanos and Giannopoulos, Apostolos and Valettas, Petros and Vritsiou, Beatrice-Helen},
  title     = {Geometry of Isotropic Convex Bodies},
  series    = {Mathematical Surveys and Monographs},
  volume    = {196},
  publisher = {American Mathematical Society},
  address   = {Providence, RI},
  year      = {2014},
  doi       = {10.1090/surv/196}
}

@misc{ChenKlartag2026,
  author = {Chen, Yuansi and Klartag, Bo'az},
  title  = {Digesting the proof of the sharp thin-shell inequality},
  year   = {2026},
  note   = {Preprint},
  url    = {https://arxiv.org/abs/2607.23307}
}

@article{DafnisPaouris2010,
  author  = {Dafnis, Nikos and Paouris, Grigoris},
  title   = {Small ball probability estimates, {$\psi_2$}-behavior and the hyperplane conjecture},
  journal = {Journal of Functional Analysis},
  volume  = {258},
  number  = {6},
  pages   = {1933--1964},
  year    = {2010},
  doi     = {10.1016/j.jfa.2009.06.038}
}

@article{GiannopoulosPafisTziotziou2025,
  author  = {Giannopoulos, Apostolos and Pafis, Minas and Tziotziou, Natalia},
  title   = {The isotropic constant in the theory of high-dimensional convex bodies},
  journal = {Bulletin of the Hellenic Mathematical Society},
  volume  = {69},
  pages   = {89--188},
  year    = {2025}
}

@misc{Guan2024,
  author = {Guan, Qingyang},
  title  = {A note on Bourgain's slicing problem},
  year   = {2024},
  note   = {Preprint},
  url    = {https://arxiv.org/abs/2412.09075}
}

@article{Klartag2023,
  author  = {Klartag, Bo'az},
  title   = {Logarithmic bounds for isoperimetry and slices of convex sets},
  journal = {Ars Inveniendi Analytica},
  year    = {2023},
  pages   = {1--17},
  note    = {Paper No. 4}
}

@article{KlartagLehec2025,
  author  = {Klartag, Bo'az and Lehec, Joseph},
  title   = {Affirmative resolution of Bourgain's slicing problem using Guan's bound},
  journal = {Geometric and Functional Analysis},
  volume  = {35},
  number  = {4},
  pages   = {1147--1168},
  year    = {2025},
  doi     = {10.1007/s00039-025-00718-w}
}

@article{KlartagLehecSurvey2025,
  author  = {Klartag, Bo'az and Lehec, Joseph},
  title   = {Isoperimetric inequalities in high-dimensional convex sets},
  journal = {Bulletin of the American Mathematical Society},
  volume  = {62},
  number  = {4},
  pages   = {575--642},
  year    = {2025},
  doi     = {10.1090/bull/1869}
}

@article{GiannopoulosPaourisVritsiou2014,
  author  = {Giannopoulos, Apostolos and Paouris, Grigoris and Vritsiou, Beatrice-Helen},
  title   = {The isotropic position and the reverse {Santal{\'o}} inequality},
  journal = {Israel Journal of Mathematics},
  volume  = {203},
  number  = {1},
  pages   = {1--22},
  year    = {2014},
  doi     = {10.1007/s11856-012-0173-2}
}

\end{document}